\newtheorem{theorem}{Theorem}[section]
\newtheorem{lemma}[theorem]{Lemma}
\newtheorem{definition}[theorem]{Definition}
\newtheorem{example}[theorem]{Example}
\numberwithin{equation}{section}
\begin{document}

\title{Composition operators and rational inner functions on the bidisc}


\author{Athanasios Beslikas}

\begin{abstract} In the present article, composition operators induced by Rational Inner Functions on the bidisc $\mathbb{D}^2$ are studied, acting on the weighted Bergman space $A^2_{\beta}(\mathbb{D}^2).$ We prove under mild conditions that Rational Inner Functions with one singularity on $\mathbb{T}^2$ induce unbounded composition operator on $A^2(\mathbb{D}^2).$ We also prove that under the condition of stability of the polynomial inducing the Rational Inner Function, the composition operator is bounded between two different Bergman spaces.  
\end{abstract}
\footnote{MSC classification: 32A37, 32A40, 30J10,\\
The author was partially supported by SHENG-3 K/NCN/000286, research project UMO2023/Q/ST1/00048}
\maketitle
\section{Introduction} Given a Banach space $X$, a domain $\Omega\subset \mathbb{C}^n$ and a holomorphic function $f$ on $\Omega,$ the composition operator is defined as $C_{\Phi}(f)=f\circ \Phi$ for $\Phi:\Omega\to \Omega$ a holomorphic self-map of $\Omega$. In the past years, the study of the composition operators has attracted a lot of attention, after the seminal work of J.Shapiro for Hardy and Bergman spaces on the unit disc $\mathbb{D}$ (see \cite{Shapiro}). More recently, the focus has turned to other domains and settings, in general. One of the settings where the study of the boundedness, compactness and other properties of the composition operator is relatively new, is in domains of $\mathbb{C}^n.$ Several authors have provided results for the Hardy and Bergman space on the unit ball $\mathbb{B}^n$ of $\mathbb{C}^n$, or the polydisc $\mathbb{D}^n.$ To be precise, in the article of Wogen \cite{Wogen}, a characterization of the holomorphic self-maps of the unit ball (which are smooth up to the boundary) that induce bounded composition operators is given, in terms of the first derivative of the symbol $\Phi$. In the works of Bayart and Kosi\'nski (see \cite{Bayart2},\cite{Kosinski}), a full characterization of the holomorphic self-maps of the bidisc (again requiring some smoothness up to the distinguished boundary $\mathbb{T}^2$) that induce bounded composition operator acting on the weighted Bergman space $A^2_{\beta}(\mathbb{D}^2)$ is provided:
\begin{theorem} Let $\Phi \in \mathcal{O}(\mathbb{D}^2,\mathbb{D}^2)\cap \mathcal{C}^2(\overline{\mathbb{D}^2})$. Then the composition operator $C_{\Phi}:A^2_{\beta}(\mathbb{D}^2)\to A^2_{\beta}(\mathbb{D}^2) $ is bounded, if and only if for all $\zeta\in \mathbb{T}^2$ such that $\Phi(\zeta)\in \mathbb{T}^2,$ the derivative $d_{\zeta} \Phi$ is invertible. 
\label{characterization}
\end{theorem}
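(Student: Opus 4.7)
My plan is to characterize boundedness of $C_\Phi$ on $A^2_\beta(\mathbb{D}^2)$ via a Carleson-measure condition on the pullback of the weighted Lebesgue measure under $\Phi$, and then to read off this condition from the local behavior of $\Phi$ near the boundary contact set $E := \{\zeta \in \mathbb{T}^2 : \Phi(\zeta) \in \mathbb{T}^2\}$, using the $\mathcal{C}^2$ hypothesis to Taylor-expand $\Phi$ at each $\zeta \in E$. By standard duality on $A^2_\beta(\mathbb{D}^2)$, the operator $C_\Phi$ is bounded if and only if the pullback measure $\nu(F) := \mu_\beta(\Phi^{-1}(F))$ is a Carleson measure for $A^2_\beta(\mathbb{D}^2)$, i.e.\ $\nu(S_{\alpha_1} \times S_{\alpha_2}) \leq C (1-|\alpha_1|)^{\beta+2}(1-|\alpha_2|)^{\beta+2}$ uniformly over product Carleson boxes. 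Because $\Phi$ extends continuously to $\overline{\mathbb{D}^2}$, the only obstruction to this estimate can come from points of $E$; away from $E$ the image stays strictly inside $\mathbb{D}^2$ on a neighborhood and contributes trivially.

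For sufficiency, assume $d_\zeta \Phi$ is invertible at every $\zeta \in E$. By compactness of $E$ and continuity of $z \mapsto d_z\Phi$ (granted by $\mathcal{C}^2$ regularity), one obtains a uniform lower bound on $|\det d_z\Phi|$ on a neighborhood of $E$. The Taylor expansion
\[\Phi(\zeta + h) = \Phi(\zeta) + d_\zeta\Phi(h) + O(|h|^2)\]
then shows that $\Phi$ is locally a bi-Lipschitz open map near each $\zeta \in E$. A change of variables bounds the $\mu_\beta$-mass of $\Phi^{-1}(S_{\alpha_1} \times S_{\alpha_2})$, for $\alpha$ near $\Phi(\zeta)$, by the mass of a comparable product Carleson box at $\zeta$, yielding the desired Carleson estimate.

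For necessity, suppose $d_\zeta \Phi$ is singular for some $\zeta \in E$, with $v \in \ker d_\zeta\Phi \setminus \{0\}$. I would test on the normalized reproducing kernels $k_w = K_\beta(\cdot,w)/\|K_\beta(\cdot,w)\|$ with $w$ approaching $\Phi(\zeta)$ from inside $\mathbb{D}^2$, so that
\[\|C_\Phi k_w\|_{A^2_\beta}^2 = \int_{\mathbb{D}^2} \frac{|K_\beta(\Phi(z),w)|^2}{K_\beta(w,w)}\, d\mu_\beta(z)\]
is comparable, up to constants, to the $\mu_\beta$-mass of $\Phi^{-1}(B_w)$ divided by $\mu_\beta(B_w)$, where $B_w$ is the product Carleson box at $w$. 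Because $\Phi(\zeta + h) - \Phi(\zeta)$ degenerates quadratically in the $v$-direction, $\Phi^{-1}(B_w)$ stretches anisotropically and its $\mu_\beta$-measure cannot be of order $\mu_\beta(B_w)$, so $\|C_\Phi k_w\| \to \infty$ as $w \to \Phi(\zeta)$, contradicting boundedness.

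The main obstacle is the bi-parameter geometry of the bidisc: Carleson boxes are two-factor objects and the non-invertibility of $d_\zeta\Phi$ can arise from degeneracy in a single coordinate or in a twisted combination of both, so the test-function construction in the necessity direction must be carefully aligned with $\ker d_\zeta \Phi$, and the sufficiency argument requires controlling the interplay between the two product directions uniformly over $E$. The $\mathcal{C}^2$ hypothesis is indispensable throughout, both for controlling the quadratic remainder in the Taylor expansion and for ensuring that $|\det d_z \Phi|$ stays uniformly bounded away from zero on a neighborhood of the (possibly rather large) set $E$.
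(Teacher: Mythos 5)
This theorem is not proved in the paper at all: it is quoted as the known characterization of Bayart and Kosi\'nski (\cite{Bayart2}, \cite{Kosinski}), and the paper only imports from those references the Carleson-measure reformulation (Lemma 3.2) and the volume estimates $V_\beta(S_\delta)\asymp\delta^{\beta+2}$. Your outline does follow the same broad strategy as those sources --- reduce boundedness to a Carleson condition on the pullback measure and analyze $\Phi$ locally near the contact set $E$ via its Taylor expansion --- so the skeleton is the right one. But as a proof it has genuine gaps at exactly the points where the cited papers do their real work.

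First, in the sufficiency direction, the step ``$\Phi$ is locally bi-Lipschitz, so a change of variables bounds $\mu_\beta(\Phi^{-1}(S(\alpha,\tilde\delta)))$ by the mass of a comparable box'' does not go through. Carleson boxes on the bidisc are bi-parameter: for $\delta_1\ll\delta_2$ the preimage of $S(\alpha,\tilde\delta)$ under even an invertible linear map that mixes the two coordinates is not comparable to any product box, and the weight $(1-|z_1|^2)^\beta(1-|z_2|^2)^\beta$ is not invariant under such mixing. The actual arguments require component-wise boundary estimates of Julia--Carath\'eodory type (lower bounds on $1-|\varphi_j(z)|$ in terms of $1-|z_1|$ and $1-|z_2|$) rather than a single Jacobian bound; this is precisely why the ``first-order condition'' is subtle and why \cite{Kosinski} is a long technical paper. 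Second, your claim that points of $\mathbb{T}^2$ mapped into $\partial\mathbb{D}^2\setminus\mathbb{T}^2$ ``contribute trivially'' is unjustified: for $\tilde\delta$ with one small and one large component, such points can produce a preimage of substantial measure, and the required bound $\delta_1^{\beta+2}\delta_2^{\beta+2}$ must still be verified there by a separate one-variable estimate. Third, in the necessity direction, asserting that the quadratic degeneracy along $\ker d_\zeta\Phi$ forces $\mu_\beta(\Phi^{-1}(B_w))\gg\mu_\beta(B_w)$ is the conclusion, not an argument: one must exhibit a concrete subset of the stretched preimage that lies inside $\mathbb{D}^2$ and carries enough \emph{weighted} mass (the weight degenerates near $\mathbb{T}^2$, so a set that is large in Lebesgue measure may still be small in $\mu_\beta$), and one must handle the case where $\ker d_\zeta\Phi$ is positioned so that the stretching occurs in a direction leaving the bidisc. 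Each of these can in principle be repaired, but none is routine, and together they constitute the substance of the theorem.
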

In the work of Kosi\'nski \cite{Kosinski} the condition "the derivative $d_{\zeta} \Phi$ is invertible for all $\zeta\in \mathbb{T}^2$ such that $\Phi(\zeta)\in \mathbb{T}^2$" is called "first-order condition" because it refers only to the first derivative of the symbol $\Phi.$
Both of the articles \cite{Bayart2}, \cite{Kosinski} are quite elegant and technical but only take care of the case where the symbol $\Phi:\mathbb{D}^2\to \mathbb{D}^2$ has coordinate functions that have some type of smoothness on the distinguished boundary of the bidisc $\mathbb{D}^2.$ In the present work, the case where the symbol has some type of singularity on the boundary is considered.  Some interesting results arise when we consider the coordinate functions to be Rational Inner Functions defined on the bidisc.\\ Rational Inner Functions and their properties are currently studied, and several authors have obtained a substantial amount of very interesting results. In the works of Sola, Knese, Bickel, Pascoe, $\mathrm{M}^c$Carthy, Bergqvist and others (see \cite{Sola}, \cite{Knese1}, \cite{Bickel}, \cite{Pascoe}, \cite{Bergvist}, \cite{McCarthy}) one can find results about the regularity and smoothness of Rational Inner Functions (or RIFs) and membership of RIFs in holomorphic function spaces (Hardy, Dirichlet spaces).\\ In the present work part of the mentioned Theorems will be applied to obtain results regarding the boundedness of the composition operator when the symbol $\Phi$ has RIFs as coordinate functions, applying the tools that Kosi\'nski and Bayart did in their articles \cite{Kosinski}, \cite{Bayart2}. The main goal of this note is to answer the following question.\\\\
\textbf{Question 1:} \textit{Assume that $\Phi:\mathbb{D}^2\to\mathbb{D}^2,$ where $\Phi=(\varphi_1,\varphi_2)$ is a holomorphic self map of the bidisc and at least one of the coordinate functions $\varphi_i, i=1,2$ is a Rational Inner Function on the bidisc, hence not always smooth on the distinguished boundary $\mathbb{T}^2.$ What can one say about the boundedness of the composition operator $C_{\Phi}:A^2_{\beta}(\mathbb{D}^2)\to A^2_{\beta}(\mathbb{D}^2)$ induced by such symbols?}
\subsection{Notation} The standard notation used in the literature will be followed throughout the whole paper: $\mathbb{D}^2$ denotes the unit bidisc, that is
$$\mathbb{D}^2=\{z=(z_1,z_2)\in\mathbb{C}^2:|z_1|<1,|z_2|<1\}.$$
By $\mathbb{T}^2$ we denote the distinguished boundary of the bidisc, the bitorus, which is defined as 
$$\mathbb{T}^2=\{\zeta=(\zeta_1,\zeta_2)\in\mathbb{C}^2:|\zeta_1|=1,|\zeta_2|=1\}.$$
By $\mathcal{O}(\Omega_1,\Omega_2)$ we denote the class of holomorphic functions $f:\Omega_1\to \Omega_2, \Omega_1\subset\mathbb{C}^n,\Omega_2\subset \mathbb{C}^m.$ Positive constants in estimates will be denoted by $C,$ or $C'.$ Other notations that will appear in the paper will be explained on the spot. 
\subsection{Organization of the paper} In Section 2 the main results of the paper are discussed. Section 3 is divided into three shorter subsections. In Subsection 3.1, background for the weighted Bergman and spaces is given and in Subsection 3.2. some introductory information regarding Rational Inner Functions (RIFs) is provided. In Section 3.3 the main tools are listed. Sections 4 and 5 are dedicated to the proofs of the results and some examples, respectively. 
\subsection{Acknowledgements} I would like to thank my supervisor, \L{}ukasz Kosi\'nski for discussing with me the main idea of this note and my friends and colleagues Marcin Tombi\'nski, Anand Chavan and Pouriya Torkinejad for the valuable discussions. Most importantly, I express my gratitude to professor Alan Sola, for his remarks, comments and general correspondence during the preparation of this work. I also express my sincere gratitude for the anonymous referee, for his careful reading of the paper, and the constructive suggestions that improved the shape of the paper.
\section{Main results} In this Section, a brief overview of the results of this work is given. The main result that we focus on is \ref{th2}. In this result we prove that if a RIF has one singularity on $\mathbb{T}^2$ and the polynomial in the denominator satisfies a certain condition, then the composition operator $C_{\Phi}:A^2(\mathbb{D}^2)\to A^2(\mathbb{D}^2)$ is not bounded. In contrast, we prove that the operator $C_{\Phi}:A^2_{\frac{\beta}{2}-2}(\mathbb{D}^2)\to A^2_{\beta}(\mathbb{D}^2)$ is bounded, whenever the RIFs are induced by stable polynomials. The third result is a volume condition for symbols that are induced by RIFs in the Schur-Agler class in the polydisc. Let us now proceed with the precise statements of the results.
\begin{theorem} Let $\Phi\in\mathcal{O}(\mathbb{D}^2,\mathbb{D}^2),$ $\Phi=(\varphi,\varphi)$ where $\varphi$ is a RIF.  Assume that $\varphi(z_1,z_2)=\frac{\tilde{p}(z_1,z_2)}{p(z_1,z_2)}$ where $p\in \mathbb{C}[z_1,z_2]$ is a polynomial which is stable on $\mathbb{D}^2,$ with a single zero on $\mathbb{T}^2.$ If there exists an exponent $q>1$ and a neighborhood $\mathcal{U}$ of $\tau$ such that
$$|p(z)|\ge C\mathrm{dist}^q(z,\mathcal{Z}_p),z\in\overline{\mathbb{D}^2\cap \mathcal{U}}$$
  then, $C_{\Phi}: A^2(\mathbb{D}^2)\to A^2(\mathbb{D}^2)$ is not bounded.
  \label{th2}
\end{theorem}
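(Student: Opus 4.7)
The plan is to prove unboundedness by exhibiting a single $f \in A^2(\mathbb{D}^2)$ for which $C_\Phi f \notin A^2(\mathbb{D}^2)$; by the closed graph theorem this forces $C_\Phi$ to be unbounded. Because $\Phi = (\varphi,\varphi)$ maps into the diagonal $\{w_1=w_2\}$ of $\mathbb{D}^2$, the composition $(C_\Phi f)(z) = f(\varphi(z),\varphi(z))$ depends only on the diagonal trace of $f$, and it is this dimensional drop that will be the source of the obstruction.

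Concretely, I would fix an exponent $\alpha \in (1/2,1)$ and a boundary point $\omega \in \mathbb{T}$ chosen as a cluster value of $\varphi$ along an approach path to $\tau$ (such an $\omega$ exists because $\varphi$ is a bidisc inner function with an essential singularity at $\tau$), and test with
\[
F(w) := (1 - \overline{\omega}\, w)^{-\alpha}, \qquad f(w_1,w_2) := F(w_1)\,F(w_2).
\]
Since $\alpha < 1$, $F \in A^2(\mathbb{D})$, hence $f \in A^2(\mathbb{D}^2)$ with $\|f\|^2 = \|F\|^4 < \infty$, while
\[
\|C_\Phi f\|^2_{A^2(\mathbb{D}^2)} = \int_{\mathbb{D}^2} \frac{dA(z)}{|1 - \overline{\omega}\,\varphi(z)|^{4\alpha}}.
\]
By Cavalieri's formula this integral is $\asymp \int_0^{2} \epsilon^{-4\alpha-1}\, \nu(\epsilon)\, d\epsilon$, where $\nu(\epsilon) := |\{z \in \mathbb{D}^2 : |\varphi(z) - \omega| < \epsilon\}|$, and it diverges whenever there exist $C' > 0$ and $\beta \leq 4\alpha$ with $\nu(\epsilon) \geq C'\epsilon^{\beta}$ for small $\epsilon$. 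Taking $\alpha$ close to $1$, it suffices to produce such an estimate with $\beta$ slightly less than $4$.

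The heart of the argument is therefore the preimage estimate for $\nu(\epsilon)$. Inside the neighborhood $\mathcal{U}$ of $\tau$ furnished by the hypothesis, I write
\[
1 - \overline{\omega}\,\varphi(z) = \frac{\omega\, p(z) - \tilde p(z)}{\omega\, p(z)},
\]
so that $\nu(\epsilon)$ essentially measures $\{z \in \mathcal{U} \cap \mathbb{D}^2 : |\omega p(z) - \tilde p(z)| < \epsilon |p(z)|\}$. The \L{}ojasiewicz bound $|p(z)| \geq C\,\mathrm{dist}(z,\mathcal{Z}_p)^{q}$ with $q>1$ controls the denominator, while the Agler/Cole--Wermer sum-of-squares identity
\[
|p(z)|^2 - |\tilde p(z)|^2 = (1-|z_1|^2)|A_1(z)|^2 + (1-|z_2|^2)|A_2(z)|^2,
\]
available because $p$ is stable and $\varphi$ is inner on $\mathbb{D}^2$, controls $|\omega p - \tilde p|^2$ in terms of the boundary defects $(1-|z_i|^2)$. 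A slice analysis, fixing $z_1$ close to $\tau_1$ and studying the one-variable rational inner function $z_2 \mapsto \varphi(z_1,z_2)$, then parametrizes a thick two-real-dimensional family inside $\varphi^{-1}(D(\omega,\epsilon))$ and yields the required lower bound for $\nu(\epsilon)$.

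The main obstacle is producing this preimage estimate cleanly: because $\tau$ is a genuine singularity of $\varphi$, the classical differential-geometric area formula does not apply at $\tau$, and the hypothesis $q>1$ must be exploited carefully so that the denominator $|p(z)|$ does not collapse faster than the numerator $|\omega p(z) - \tilde p(z)|$ along the approach paths used in the slice analysis. Once the lower bound on $\nu(\epsilon)$ is in hand, the Cavalieri computation forces $\|C_\Phi f\|_{A^2(\mathbb{D}^2)} = +\infty$, and unboundedness of $C_\Phi$ follows.
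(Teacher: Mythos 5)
Your outer framework is a legitimate variant of the paper's: instead of invoking the Carleson-measure criterion for boundedness, you test against $f=F\otimes F$ with $F(w)=(1-\overline{\omega}w)^{-\alpha}$ and reduce, via the distribution function, to the single estimate $\nu(\epsilon)=V(\{z:|\varphi(z)-\omega|<\epsilon\})\ge C\epsilon^{\beta}$ for some $\beta<4$. This reduction is correct (the diagonal symbol $\Phi=(\varphi,\varphi)$ doubles the exponent to $4\alpha$, exactly as you say), and it is equivalent in substance to the paper's comparison of $V(\Phi^{-1}(S(\zeta,\tilde\delta)))$ with $V(S(\zeta,\tilde\delta))\asymp\delta^4$. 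The problem is that you do not prove the estimate on $\nu(\epsilon)$ -- you explicitly defer it as ``the main obstacle'' -- and this estimate is the entire content of the theorem; everything you do prove is routine packaging around it.

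Moreover, the mechanism you propose for that estimate points in the wrong direction. The Agler sum-of-squares identity gives $|p|^2-|\tilde p|^2=(|p|-|\tilde p|)(|p|+|\tilde p|)$, hence a \emph{lower} bound for $|\varphi(z)-\zeta|\ge\bigl||p|-|\tilde p|\bigr|/|p|$ in terms of the boundary defects $(1-|z_j|^2)$; this traps the sublevel set inside a larger set and produces \emph{upper} bounds on $\nu(\epsilon)$ -- it is precisely the tool the paper uses for the boundedness results (Theorems 2.2 and 2.3), not for unboundedness. What you need here is an \emph{upper} bound on the numerator $|\omega p(z)-\tilde p(z)|$, which the SOS identity does not provide, since $|\omega p-\tilde p|$ involves the relative phase of $p$ and $\tilde p$ and not just their moduli. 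The paper gets this upper bound by an elementary continuity argument: with $\omega=1$ the non-tangential value at $\tau$, both $p$ and $\tilde p$ vanish at $\tau$, so $|\tilde p(z)-p(z)|\le\epsilon$ on a small enough neighborhood of $\tau$; combined with the hypothesis $|p(z)|\ge C\,\mathrm{dist}^q(z,\mathcal Z_p)$ this yields $|\varphi(z)-1|\le\epsilon/(C\,\mathrm{dist}^q(z,\mathcal Z_p))$, so the sublevel set $\{|\varphi-1|<\delta\}$ contains an explicit bi-annulus $\{(\epsilon/2\delta)^{1/q}<|z_i-1|<\epsilon^{1/q}\}$ of volume $\asymp\delta^{4/q}$, and $4/q<4$ precisely because $q>1$. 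Your ``slice analysis'' might be made to work, but as written it is a placeholder; to complete your argument you should replace the SOS step by this continuity-plus-\L{}ojasiewicz bound (or something equivalent), after which your Cavalieri computation with $\alpha\in(1/q,1)$ does finish the proof.
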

The next Theorem considers symbols that are induced by the same RIF as co-ordinate function, which, in turn, is induced by stable polynomials on $\overline{\mathbb{D}^2}.$
\begin{theorem} Let $\Phi=(\varphi,\varphi)\in \mathcal{O}(\mathbb{D}^2,\mathbb{D}^2), \varphi=\frac{\tilde{p}}{p},$ $p$ a stable polynomial on $\overline{\mathbb{D}^2}.$ Then, for all $\beta>4,$ the composition operator
$C_{\Phi}:A^2_{\frac{\beta}{2}-2}(\mathbb{D}^2)\to A^2_{\beta}(\mathbb{D}^2)$ is bounded.
\label{th3}
\end{theorem}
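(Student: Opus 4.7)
The plan is to combine the Bergman reproducing-kernel pointwise bound with a Schwarz--Pick-type lower estimate on $1-|\varphi(z)|^2$. Since $p$ is stable on $\overline{\mathbb{D}^2}$, the ratio $\varphi=\tilde p/p$ extends holomorphically to a neighborhood of the closed bidisc and is a genuine holomorphic self-map $\mathbb{D}^2\to\mathbb{D}$. Writing $\alpha=\beta/2-2$, so that $\alpha+2=\beta/2$, the standard pointwise reproducing-kernel estimate in $A^2_\alpha(\mathbb{D}^2)$ gives
\[
|f(w_1,w_2)|^2\leq \frac{C\,\|f\|^2_{A^2_{\beta/2-2}}}{(1-|w_1|^2)^{\beta/2}(1-|w_2|^2)^{\beta/2}}.
\]
Setting $(w_1,w_2)=\Phi(z)=(\varphi(z),\varphi(z))$, multiplying by the $A^2_\beta$ weight, and integrating over $\mathbb{D}^2$ reduces the boundedness of $C_\Phi$ to the finiteness of
\[
I:=\int_{\mathbb{D}^2}\frac{(1-|z_1|^2)^\beta(1-|z_2|^2)^\beta}{(1-|\varphi(z)|^2)^\beta}\,dA(z).
\]

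For the lower bound on $1-|\varphi(z)|^2$ I would invoke the Schwarz--Pick inequality for the holomorphic self-map $\varphi:\mathbb{D}^2\to\mathbb{D}$. Since the Kobayashi pseudodistance on the bidisc is the maximum of the coordinate Poincar\'e distances, $\varphi$ contracting it yields $\rho_{\mathbb{D}}(\varphi(z),\varphi(0))\leq \max\{\rho_{\mathbb{D}}(z_1,0),\rho_{\mathbb{D}}(z_2,0)\}$. Unpacking this through the standard pseudohyperbolic distance identity together with the elementary bound $|1-\overline{\varphi(0)}\varphi(z)|\geq 1-|\varphi(0)|$, one obtains
\[
1-|\varphi(z)|^2\geq c_0\,\min\bigl(1-|z_1|^2,\,1-|z_2|^2\bigr),\qquad c_0=\frac{1-|\varphi(0)|}{1+|\varphi(0)|}>0,
\]
where stability of $p$ on $\overline{\mathbb{D}^2}$ is what guarantees $|\varphi(0)|<1$ and hence $c_0>0$.

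Substituting this into $I$ and splitting $\mathbb{D}^2$ into the regions $\{1-|z_1|^2\leq 1-|z_2|^2\}$ and its complement, on each piece the integrand $(1-|z_1|^2)^\beta(1-|z_2|^2)^\beta/\min(1-|z_1|^2,1-|z_2|^2)^\beta$ simplifies to $(1-|z_j|^2)^\beta$ for $j$ the index of the larger factor. Fubini then reduces $I$ to a constant multiple of $\int_{\mathbb{D}}(1-|z|^2)^\beta\,dA(z)\cdot|\mathbb{D}|$, finite because $\beta>4>-1$. The hypothesis $\beta>4$ in particular makes $\alpha=\beta/2-2>0$, ensuring the source-space pointwise estimate is applied in a comfortable regime.

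The main obstacle is producing the Schwarz--Pick lower bound in the correct form: crucially a \emph{minimum} of the two $(1-|z_i|^2)$ factors, not a sum or product, which is delivered precisely by the $\max$-of-Poincar\'e structure of the bidisc Kobayashi metric; the hypothesis that $p$ is stable on $\overline{\mathbb{D}^2}$ keeps the Schwarz--Pick constant $c_0$ bounded away from zero, since $\varphi$ has no boundary blow-ups and $|\varphi(0)|<1$ unambiguously.
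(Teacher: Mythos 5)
Your proof is correct, but it takes a genuinely different route from the paper's. The paper argues through the Carleson-box volume criterion: it invokes the stability characterization of Theorem \ref{bickelstable}, $|\tilde p(z)|^2-|p(z)|^2\ge C(1-|z_1|^2)(1-|z_2|^2)$, to obtain the pointwise lower bound $|\varphi(z)-\zeta|\ge C'(1-|z_1|^2)(1-|z_2|^2)$ for every $\zeta\in\mathbb{T}$, concludes that the sublevel set $\{|\varphi-\zeta|<\delta\}$ is contained in $\{(1-|z_1|^2)(1-|z_2|^2)\le 2M^2\delta\}$, and bounds the $V_\beta$-volume of the latter by $C\delta^{\beta}=C\delta^{2(\beta/2-2)+4}$, which is exactly the two-weight Carleson condition. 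You instead test the operator directly: the reproducing-kernel bound in $A^2_{\beta/2-2}$ reduces everything to the finiteness of $\int_{\mathbb{D}^2}(1-|z_1|^2)^\beta(1-|z_2|^2)^\beta(1-|\varphi(z)|^2)^{-\beta}\,dA$, and the Kobayashi/Schwarz--Pick contraction gives $1-|\varphi(z)|^2\ge c_0\min(1-|z_1|^2,1-|z_2|^2)$, which makes the integrand bounded since the quotient collapses to $\max(1-|z_1|^2,1-|z_2|^2)^\beta\le 1$. Both arguments are complete. Yours is more elementary and strictly more general: the Schwarz--Pick step uses nothing about stability --- the inequality $|\varphi(0)|<1$ holds for \emph{every} holomorphic $\varphi:\mathbb{D}^2\to\mathbb{D}$, not because $p$ is stable as your closing paragraph suggests --- so your argument in fact shows that $C_\Phi:A^2_{\beta/2-2}(\mathbb{D}^2)\to A^2_{\beta}(\mathbb{D}^2)$ is bounded for \emph{any} diagonal symbol $\Phi=(\varphi,\varphi)$ with $\varphi\in\mathcal{O}(\mathbb{D}^2,\mathbb{D})$, singular RIFs included, which subsumes the stated theorem. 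What the paper's route buys is a product-type lower bound $(1-|z_1|^2)(1-|z_2|^2)$ that is specific to stable $p$ and is phrased in the language of $V_\beta$-volumes of sublevel sets used throughout the rest of the paper (and needed for Theorems \ref{th2} and \ref{th4}); your $\min$-type bound is the universal one and happens to already suffice at this particular gap between the weights.
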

The last result is a sufficient volume condition with the assumption that  all coordinate functions belong to the \textit{Schur-Agler class} of RIFs defined on the polydisc $\mathbb{D}^n$.
\begin{theorem}
Let $\Phi=(\varphi_1,...,\varphi_n):\mathbb{D}^n\to\mathbb{D}^n$, where $\varphi_i=\frac{\tilde{p_i}}{p_i},i=1,...,n$ rational inner functions on the bidisc that belong to the Schur-Agler class. If the $V_\beta$-volumes satisfy
$$V_{\beta}\left(\biggl\{z\in \mathbb{D}^n:\sum_{j=1}^n(1-|z_j|^2)SOS_{ij}\leq \delta_i M\biggr\}\right)\leq C\delta_i^{n(\beta+2)}$$
for $i=1,...,n,$ for all $\delta_i\in(0,1),$ for some constant $M>0,$ then, the composition operator $C_{\Phi}: A_{\beta}^2(\mathbb{D}^n)\to A^2_{\beta}(\mathbb{D}^n)$
is bounded, where $\beta\ge -1,$ and $SOS_{ij}$ are sums of squared moduli of  the polynomials $p_i\in \mathbb{C}[z_1,...,z_n].$
\label{th4}
\end{theorem}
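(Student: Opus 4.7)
The plan is to reduce the boundedness of $C_\Phi$ on $A^2_\beta(\mathbb{D}^n)$ to a Carleson-measure estimate for the pullback measure $\mu_\Phi$ on $\mathbb{D}^n$, defined by $\mu_\Phi(E)=V_\beta(\Phi^{-1}(E))$. The standard change-of-variables identity
$$\|C_\Phi f\|_{A^2_\beta}^2 = \int_{\mathbb{D}^n}|f(w)|^2\,d\mu_\Phi(w)$$
shows that $C_\Phi$ is bounded if and only if $A^2_\beta(\mathbb{D}^n)$ embeds continuously into $L^2(\mu_\Phi)$, i.e.\ $\mu_\Phi$ is a Bergman--Carleson measure on the polydisc. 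The sufficient condition I will aim to verify, in the spirit of Luecking-type criteria transplanted to $\mathbb{D}^n$, is the product-box estimate $\mu_\Phi(S(\zeta,\delta_1,\dots,\delta_n))\leq C\prod_{i=1}^n \delta_i^{\beta+2}$ for every Carleson window $S(\zeta,\delta_1,\dots,\delta_n)=\prod_i S(\zeta_i,\delta_i)$.

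The algebraic link between this target and the hypothesis is supplied by the Schur--Agler structure. For each coordinate $\varphi_i=\tilde p_i/p_i$ in the Schur--Agler class the Agler decomposition yields
$$1-|\varphi_i(z)|^2 = \frac{\sum_{j=1}^n (1-|z_j|^2)\, SOS_{ij}(z)}{|p_i(z)|^2},$$
with the $SOS_{ij}$ precisely the sums of squared moduli appearing in the statement. Since $p_i$ is a polynomial on the compact set $\overline{\mathbb{D}^n}$, the denominator is uniformly bounded above by a constant that may be absorbed into $M$, and one obtains the inclusion
$$\{z\in\mathbb{D}^n:1-|\varphi_i(z)|^2\leq \delta_i\}\ \subseteq\ \Bigl\{z\in\mathbb{D}^n:\sum_{j=1}^n (1-|z_j|^2)SOS_{ij}(z)\leq \delta_i M\Bigr\}.$$

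Applying the assumed volume bound to the larger set yields, for every $i$ and every $\delta_i\in(0,1)$, the slab estimate
$$\mu_\Phi\bigl(\{w:1-|w_i|^2\leq \delta_i\}\bigr) = V_\beta\bigl(\{z:1-|\varphi_i(z)|^2\leq \delta_i\}\bigr)\leq C\delta_i^{n(\beta+2)}.$$
Since every Carleson box $S(\zeta,\delta_1,\dots,\delta_n)$ lies in the slab $\{w:1-|w_i|^2\leq\delta_i\}$ for each $i$, the slab estimate gives $\mu_\Phi(S)\leq C\delta_i^{n(\beta+2)}$ for each $i$. Passing to the minimum over $i$ and using the elementary inequality $(\min_i\delta_i)^n\leq\prod_i \delta_i$ valid on $(0,1)^n$, together with $\beta+2\geq 1$, produces
$$\mu_\Phi(S)\leq C\bigl(\min_i\delta_i\bigr)^{n(\beta+2)}\leq C\prod_{i=1}^n\delta_i^{\beta+2},$$
which is exactly the required product-box condition.

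The main obstacles I anticipate lie not in this chain of estimates, which is essentially formal, but in two ancillary points. First, the passage from the product-box condition to the full Bergman--Carleson embedding on $\mathbb{D}^n$ is noticeably more delicate than in one variable; it requires either invoking a multi-parameter Luecking-type characterization for weighted polydisc Bergman spaces, or running a Schur test directly against the reproducing kernel $\prod_i (1-z_i\bar w_i)^{-(\beta+2)}$, and it is here that the assumption $\beta\geq -1$ and the integrability of the weight play their role. Second, the finite sums-of-squares representation of $1-|\varphi_i|^2$ is exactly what the Schur--Agler assumption buys: for $n=2$ this is automatic from Agler's theorem, but for $n\geq 3$ the Schur--Agler class is strictly smaller than the Schur class, and without this assumption one cannot write the Agler numerator in the finite SOS form that drives the argument.
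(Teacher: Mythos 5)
Your proposal is correct and follows essentially the same route as the paper: both reduce boundedness to the Carleson pullback-volume condition and use the Agler sum-of-squares decomposition to show that the relevant preimage is contained in the sublevel set $\{z\in\mathbb{D}^n:\sum_{j=1}^n(1-|z_j|^2)SOS_{ij}(z)\leq \delta_i M\}$ from the hypothesis. The only cosmetic difference is that you pass through $1-|\varphi_i(z)|^2$ and a slab containment, whereas the paper bounds $|\varphi_i(z)-\zeta|$ from below by $\frac{1}{2M_i^2}\sum_{j}(1-|z_j|^2)SOS_{ij}(z)$ directly --- equivalent via $|\varphi_i(z)-\zeta|\ge 1-|\varphi_i(z)|\ge\tfrac12(1-|\varphi_i(z)|^2)$ --- and you additionally spell out the $\min_i\delta_i$ comparison with $\prod_i\delta_i^{\beta+2}$ that the paper leaves implicit.
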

A brief mention for the Schur-Agler class of holomorphic functions on the polydisc is made in Section 3, as well as the sum of squares formula for polynomials $p\in\mathbb{C}[z_1,z_2].$
\section{Background material and tools.}
\subsection{Background on function spaces} As stated in the introduction, we focus on the weighted Bergman spaces on the bidisc. The weighted Bergman spaces $A_{\beta}^2(\mathbb{D}^2)$ on the bidisc are consisted of the holomorphic functions $f\in \mathcal{O}(\mathbb{D}^2,\mathbb{C})$ such that
$$\int_{\mathbb{D}^2}|f(z_1,z_2)|^2(1-|z_1|^2)^{\beta}(1-|z_2|^2)^{\beta}dA(z_1)dA(z_2)<\infty,$$ where $\beta\ge -1$ and $dA(z)=\frac{1}{\pi}dxdy,$ is the normalised Lebesgue measure on the disc. These spaces are of course Hilbert spaces, and enjoy the property of bounded point evaluation functional.\\
A lot of work has been done regarding the membership of RIFs into the Hardy and Dirichlet-type spaces on the bidisc, but not in the Bergman spaces. We remind the interested reader that the anisotropic Dirichlet-type spaces on the bidisc $\mathfrak{D}_{\vec{a}}(\mathbb{D}^2)$ were defined by Kaptanoglu (see \cite{Kaptanoglu} for more information about the Dirichlet-type spaces on polydiscs.)
\subsection{Backround on RIFs on the bidisc.}
The study of Rational Inner Functions (RIFs) has generated important results recently. In the one-dimensional setting, a holomorphic function $f$ on $\mathbb{D}$ is called \textit{inner} if $|f|=1$ a.e. on the unit circle $\mathbb{T}.$ In the one dimensional setting the rational functions that are \textit{inner} are the finite Blaschke products. In the multivariate setting, in order to characterize the RIFs we need polynomials in two variables. For simplicity, we present elements of this theory on two variables, but the results hold for higher complex dimension. Consider a polynomial $p\in \mathbb{C}[z_1,z_2]$ with bidegree $(n,m)\in \mathbb{N}^2.$ The reflection of a polynomial $p\in\mathbb{C}[z_1,z_2]$ is defined as 
$$\tilde{p}(z)=z_1^nz_2^m\overline{p\left(\frac{1}{\overline{z_1}},\frac{1}{\overline{z_2}}\right)}$$
Rudin's theorem (see \cite{Rudin}) assures that every RIF on the bidisc setting (or polydisc if modified accordingly), has the following representation:
$$\varphi(z_1,z_2)=\lambda z_1^Mz_2^N\frac{\tilde{p}(z_1,z_2)}{p(z_1,z_2)},$$ 

where $\lambda\in \mathbb{C}$ is a unimodular constant.
Contrary to what happens in the one dimension, in 2 or more complex dimensions RIFs may have singularities. Let $p\in\mathbb{C}[z_1,z_2]$ which has no zeros on the open bidisc $\mathbb{D}^2$ and has zeros only on the distinguished boundary $\mathbb{T}^2.$ Let $\varphi(z_1,z_2)=\lambda\frac{\tilde{p}(z_1,z_2)}{p(z_1,z_2)}$ be the corresponding RIF. If a point $\zeta \in \mathbb{T}^2$ belongs to the zero set $\mathcal{Z}_p$ then we say that it is a \textit{singular point} of the RIF $\varphi.$ Even though RIFs in two dimensions have singularities, these singularities can be tamed in the sense that the non-tangential limit exists always, even on the singular points (see Theorem C of \cite{Knese2}) and is an element of $\mathbb{T}.$ Higher notions of regularity also exist for these functions on their singular points (see for example \cite{Knese1}, \cite{Pascoe}, \cite{Bickel} and \cite{Knese2}). 
Last but not least, a brief mention has to be made regarding RIFs in the \textit{Schur-Agler class}. We provide the definition on the polydisc setting:
\begin{definition}
If $f :\mathbb{D}^n\to \mathbb{D}$ is holomorphic, then we say $f$ satisfies
the von Neumann inequality or $f$ is in the Schur-Agler class if
$$||f(T)|| \leq 1$$
for all commuting n-tuples of strict contractions $T = (T_1, . . . , T_n).$ 
\end{definition}
We remind the reader that an operator $T$ is a strict contraction, if the operator norm satisfies $||T||< 1.$
Note that all RIFs in two complex dimensions belong to the \textit{Schur-Agler class}.
\subsection{Tools} In this subsection the tools that will be used are presented. Most of the requiered Lemmata and Theorems are contained in the works of Bayart, Koo, Stessin, Zhu, and Kosi\'nski in their respective articles \cite{Bayart2}, \cite{Koo}, \cite{Kosinski}. 
The main tools used to obtain Theorem \ref{characterization}, are volume estimates based on Carleson conditions for the pull-back measure. Recall that a two-dimensional Carleson box is defined as $$S(\zeta,\tilde{\delta})=\{(z_1,z_2)\in\mathbb{D}^2:|z_1-\zeta_1|<\delta_1,|z_2-\zeta_2|<\delta_2\},$$
where
$\zeta=(\zeta_1,\zeta_2)\in \mathbb{T}^2, \tilde{\delta}=(\delta_1,\delta_2)\in(0,2)^2.$
\begin{lemma}
Let $\Phi:\mathbb{D}^2\to \mathbb{D}^2$ be a holomorphic self-map of the bidisc. The composition operator $C_{\Phi}: A^2_{\beta}(\mathbb{D}^2)\to A^2_{\beta}(\mathbb{D}^2)$ is bounded if and only if there is a constant $C_{\beta}>0$ such that for every $\tilde{\delta} \in (0,2)^2$ and $\zeta\in\mathbb{T}^2$:
$$V_{\beta}(\Phi^{-1}(S(\zeta,\tilde{\delta})))\leq C_{\beta}V_{\beta}(S(\zeta,\tilde{\delta})).$$
Moreover, the constant $C_\beta$ can be chosen independently of $\beta$ whenever $\beta \in (-1,0].$
\end{lemma}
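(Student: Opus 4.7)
My plan is to pass, as is standard for this kind of statement, from the operator-norm estimate on $C_\Phi$ to a Carleson-type condition for the pullback measure. Define the push-forward
$$\mu_\Phi(E) = V_\beta\bigl(\Phi^{-1}(E)\bigr), \qquad E\subset \mathbb{D}^2 \text{ Borel},$$
so that by the change of variables formula
$$\|C_\Phi f\|_{A^2_\beta}^2 \;=\; \int_{\mathbb{D}^2}|f(\Phi(z))|^2\,dV_\beta(z) \;=\; \int_{\mathbb{D}^2}|f(w)|^2\,d\mu_\Phi(w).$$
Hence $C_\Phi$ is bounded on $A^2_\beta(\mathbb{D}^2)$ if and only if $\mu_\Phi$ is a Bergman--Carleson measure for $A^2_\beta(\mathbb{D}^2)$, and the problem reduces to showing that the latter property is equivalent to the rectangular Carleson-box condition stated in the lemma.

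For necessity, I would test $C_\Phi$ against the normalized reproducing kernels of $A^2_\beta(\mathbb{D}^2)$, namely
$$k_{w,\beta}(z_1,z_2) \;=\; \frac{(1-|w_1|^2)^{(\beta+2)/2}(1-|w_2|^2)^{(\beta+2)/2}}{(1-\overline{w_1}z_1)^{\beta+2}(1-\overline{w_2}z_2)^{\beta+2}},$$
which have uniformly bounded $A^2_\beta$-norms and concentrate around $w$. Choosing $w=((1-\delta_1)\zeta_1,(1-\delta_2)\zeta_2)$ for a given Carleson box $S(\zeta,\tilde\delta)$, the inequality $\int |k_{w,\beta}|^2\,d\mu_\Phi \le \|C_\Phi\|^2$ combined with the standard lower bound $|k_{w,\beta}(z)|^2 \gtrsim V_\beta(S(\zeta,\tilde\delta))^{-1}$ for $z\in S(\zeta,\tilde\delta)$ yields the desired estimate $\mu_\Phi(S(\zeta,\tilde\delta))\le C_\beta V_\beta(S(\zeta,\tilde\delta))$.

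For sufficiency, I would invoke the two-variable (product) version of Luecking's characterization of Carleson measures for weighted Bergman spaces on the polydisc: if $\mu$ satisfies the rectangular box estimate, then a dyadic decomposition of $\mathbb{D}^2$ into Carleson tents combined with the sub-mean-value property of $|f|^2$ on each tent yields
$$\int_{\mathbb{D}^2}|f|^2\,d\mu \;\le\; C\sum_{Q\in\mathcal{D}} \mu(Q)\,\sup_{Q}|f|^2 \;\le\; C'\sum_Q V_\beta(Q)\,\sup_Q|f|^2 \;\le\; C''\|f\|_{A^2_\beta}^2,$$
which is exactly the boundedness of $\mu$ as a Carleson measure. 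Applying this to $\mu_\Phi$ gives boundedness of $C_\Phi$.

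The main obstacle is not conceptual but bookkeeping: one needs a product Carleson-box decomposition on $\mathbb{D}^2$ that produces estimates independent of $\beta$ in the range $(-1,0]$. The key is to normalize the Bergman reproducing-kernel test functions carefully so that the hidden constants depend only on the integrals $\int_{\mathbb{D}^2}(1-|w|^2)^{\beta+2}|1-\overline{w}z|^{-2(\beta+2)}dV_\beta(w)$, which, by the classical one-variable Forelli--Rudin estimate applied in each slot, are uniformly controlled for $\beta \in (-1,0]$. Assembling these two tensorized estimates concludes the uniform statement on $\beta$.
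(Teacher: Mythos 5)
The paper does not prove this lemma at all: it is quoted verbatim as a tool imported from the cited works of Bayart, Koo--Stessin--Zhu and Kosi\'nski, so there is no in-paper argument to compare against. Your proposal is the standard proof from exactly those sources, and it is correct in outline: the identity $\|C_\Phi f\|_{A^2_\beta}^2=\int|f|^2\,d\mu_\Phi$ reduces the problem to a Carleson-embedding characterization; necessity follows by testing against normalized product reproducing kernels located at $((1-\delta_1)\zeta_1,(1-\delta_2)\zeta_2)$; and sufficiency follows from the Hastings--Luecking phenomenon that, for \emph{Bergman} (as opposed to Hardy) spaces on the polydisc, the simple product-box condition already implies the full embedding, via the sub-mean-value property of $|f|^2$ on a dyadic tent decomposition with bounded overlap. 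The only place I would press you is the final claim of uniformity of $C_\beta$ for $\beta\in(-1,0]$: your appeal to Forelli--Rudin estimates is the right idea, but you should check that the constants in both the kernel lower bound and the sub-mean-value inequality stay bounded as $\beta\to -1^{+}$ under the paper's (unnormalized) definition of $V_\beta$, since quantities such as $\int_{\mathbb{D}}(1-|w|^2)^\beta\,dA(w)=(\beta+1)^{-1}$ do blow up there and the uniformity is precisely the delicate point Kosi\'nski needs in order to pass to the Hardy-space limit.
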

The above Lemma, combined with the estimates that appear in the articles of Koo, Stessin, Zhu, Bayart and Kosi\'nski, generates some sufficient and necessary volume condition for boundedness of $C_{\Phi}$. To be precise, setting $S_\delta=\{z\in\mathbb{D}:|1-z|<\delta\},$ it is proved that $C^{-1}\delta^{2+\beta}\leq V_\beta(S_{\delta})\leq C\delta^{\beta+2}.$ It is easy for one to check that if we set $\Phi=(\varphi,\varphi):\mathbb{D}^2\to\mathbb{D}^2,$ and consider $\tilde{\delta}=(\delta,\delta),$ $\delta\in(0,2),$ then $$V_{\beta}(\Phi^{-1}(S(\zeta,\tilde{\delta})))=V_{\beta}(\{z\in\mathbb{D}^2:|\varphi(z)-\zeta|<\delta\}).$$
If one now proves that for some $C>0$
$$V_{\beta}(\{z\in\mathbb{D}^2:|\varphi(z)-\zeta|<\delta\})>C\delta^{2a+4}$$
for some $\zeta\in\mathbb{T}$ and for $\delta\in(0,2),$ then the operator $C_{\Phi}:A^2_{a}(\mathbb{D}^2)\to A^2_{\beta}(\mathbb{D}^2)$ is not bounded. In contrast, if one proves the reverse inequality for all $\zeta \in \mathbb{T}$ and for all $\tilde{\delta}=(\delta_1,\delta_2)\in(0,2)^2,$ then the boundedness of $C_{\Phi}:A^2_{a}(\mathbb{D}^2)\to A^2_{\beta}(\mathbb{D}^2)$ is proved. 
This observation will be crucial in the sequel.
A well established Theorem that will turn out to be proved pivotal for the proof of Theorem \ref{th2}, is the classical \L{}ojasiewicz inequality. More information on this inequality can be found in the work \cite{Loja}. An application of \L{}ojasiewicz inequality has been made in \cite{Kosinski2} and \cite{Vavitsas} to prove cyclicity conditions for polynomials in Dirichlet-type spaces on the bidisc and on the two-dimensional unit ball.
\begin{theorem} (\L{}ojasiewicz inequality) Let  $f: U \to  \mathbb{R}$ be a real analytic function on an open set $U$ in $\mathbb{R}^n$, and let $\mathcal{Z}_f$ be the zero set of $f$. Assume that $\mathcal{Z}_f$ is not empty. Then, for any compact set $K$ in $U$, there exist positive constants $q$ and $C$ such that, for all $x$ in $K$
$$\mathrm{dist}^q(x,\mathcal{Z}_f)\leq C|f(x)|.$$
\end{theorem}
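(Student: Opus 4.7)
The plan is to reduce the statement to a local assertion near each point of $K$ and to treat the essential case $x_0 \in K \cap \mathcal{Z}_f$ by semi-analytic geometry. By compactness, $K$ is covered by finitely many small open sets on each of which the inequality holds with some local exponent $q_i$ and constant $C_i$; the global inequality then follows by taking $q=\max_i q_i$ and absorbing the (bounded) diameter of $K$ into the constant. For $x_0 \in K\setminus\mathcal{Z}_f$, the local bound is immediate since $|f|$ is bounded below on a small neighborhood, so only the case $x_0\in K\cap\mathcal{Z}_f$ matters.

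Fix such an $x_0$ and a relatively compact neighborhood $U_0\subset U$. For each integer $N\ge 1$ consider the set
\[
S_N=\bigl\{x\in U_0\setminus\mathcal{Z}_f : |f(x)|^2\le \mathrm{dist}(x,\mathcal{Z}_f)^{2N}\bigr\}.
\]
Since $\mathcal{Z}_f$ is real analytic, $\mathrm{dist}(\,\cdot\,,\mathcal{Z}_f)^2$ is a semi-analytic function on $U_0$, and $S_N$ is therefore semi-analytic. Failure of the local inequality for every exponent at $x_0$ forces $x_0\in\overline{S_N}$ for all $N$. The curve selection lemma of \L{}ojasiewicz then furnishes a nonconstant real analytic arc $\gamma:[0,\varepsilon)\to\overline{U_0}$ with $\gamma(0)=x_0$ and $\gamma(t)\in S_N$ for $t\in(0,\varepsilon)$.

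Set $g(t)=f(\gamma(t))$, a real analytic function on $[0,\varepsilon)$ with $g(0)=0$. Since $\gamma(t)\notin\mathcal{Z}_f$ for $t>0$, $g$ is not identically zero, hence vanishes at $t=0$ to some finite order $m$, giving $|g(t)|\ge c\,t^m$ for small $t$. On the other hand, $\gamma$ is Lipschitz with $\gamma(0)=x_0\in\mathcal{Z}_f$, so $\mathrm{dist}(\gamma(t),\mathcal{Z}_f)\le C't$, and membership in $S_N$ forces $|g(t)|\le (C')^N t^N$. Thus $c\,t^m\le (C')^N t^N$ for small $t>0$, which fails once $N>m$. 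The subtlety is that the arc $\gamma$ and the order $m$ depend on $N$; the standard workaround is to show, using Taylor expansions of $f$ at $x_0$ together with a Puiseux-type description of possible analytic arcs through $x_0$, that $m$ admits an a priori bound $M(x_0,f)$ independent of $N$, so choosing $N>M(x_0,f)$ delivers the contradiction.

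The main obstacle is the curve selection lemma, which is itself a deep result of \L{}ojasiewicz resting on stratification of semi-analytic sets: one decomposes $S_N$ into real analytic manifolds and constructs the arc through $x_0$ by traversing the strata analytically. A self-contained alternative avoids the curve selection lemma and instead applies the Weierstrass preparation theorem near $x_0$ to reduce, after an analytic change of coordinates, to the case where $f$ is a distinguished polynomial in one variable with analytic coefficients, after which one inducts on the ambient dimension $n$ via Puiseux parametrizations of the roots. This route is more elementary but markedly longer, and in either approach nontrivial real analytic geometry is unavoidable.
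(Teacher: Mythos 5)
First, a remark on the comparison itself: the paper does not prove this statement. It is the classical \L{}ojasiewicz inequality, quoted in the Tools subsection with a citation to \cite{Loja} and used as a black box, so there is no proof of record to measure yours against; you are attempting to reprove a deep theorem of real analytic geometry from scratch.

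Your scaffolding is sound: the compactness reduction, the observation that only points of $K\cap\mathcal{Z}_f$ matter, the definition of $S_N$, the fact that local failure for every exponent places $x_0$ in $\overline{S_N}$ for every $N$, and the deduction that an arc $\gamma$ selected inside $S_N$ must satisfy $\mathrm{ord}_0(f\circ\gamma)\ge N$. But the step you yourself flag as a ``subtlety'' is where the argument genuinely breaks. The proposed workaround --- that the vanishing order $m$ of $f\circ\gamma$ at $0$ admits a bound $M(x_0,f)$ uniform over all analytic arcs through $x_0$ not contained in $\mathcal{Z}_f$ --- is false: for $f(x,y)=y$ and $x_0=0$, the arcs $\gamma_m(t)=(t,t^m)$ avoid $\mathcal{Z}_f$ for $t>0$ and give $\mathrm{ord}_0(f\circ\gamma_m)=m$, which is unbounded. (Those arcs happen not to lie in $S_N$ for $N\ge 2$, but only because the \L{}ojasiewicz inequality holds for $f=y$ with $q=1$; invoking that here would be circular.) What the contradiction actually requires is a bound, uniform over arcs, on the ratio $\mathrm{ord}_0(f\circ\gamma)\,/\,\mathrm{ord}_0\bigl(t\mapsto\mathrm{dist}(\gamma(t),\mathcal{Z}_f)\bigr)$, and by curve selection the finiteness of the supremum of that ratio is essentially equivalent to the \L{}ojasiewicz inequality itself, so the argument is circular exactly at its core step. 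A secondary technical point: $\mathrm{dist}(\cdot,\mathcal{Z}_f)$ is in general only subanalytic, not semianalytic (its graph is a projection of a semianalytic set), so you need the subanalytic curve selection lemma. The standard non-circular routes are either the one you relegate to your final paragraph (Weierstrass preparation and induction on the ambient dimension, \L{}ojasiewicz's original argument), or the subanalytic one: show that $\Psi(s)=\sup\{\mathrm{dist}(x,\mathcal{Z}_f):x\in K,\ |f(x)|\le s\}$ is a bounded, monotone, subanalytic function of one variable with $\Psi(0)=0$, hence admits a Puiseux expansion $\Psi(s)=O(s^{1/q})$ as $s\to 0^+$, which is the desired inequality. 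Either route would have to be carried out in full for the proof to stand; as written, the central claim is unsubstantiated and, in the form stated, false.
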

Here "$\mathrm{dist}(x,\mathcal{Z}_f)$" denotes the Euclidean distance of a point $x\in K$ from the zero set of the function $f.$ 
The next result is related to RIFs that belong to the \textit{Schur-Agler} class. 
\begin{theorem} Given a polynomial $p \in \mathbb{C}[z_1,...,z_n]$ with no zeros in $\mathbb{D}^n$ and poly-degree at most \textbf{m}$=(m,...,m)$. The function $\tilde{p}/p$ is in the Schur-Agler class exactly when
$$|p(z)|^2-|\tilde{p}(z)|^2=\sum_{j=1}^n(1-|z_j|^2)SOS_j(z),$$
where $SOS_j$ denotes the sum of squared moduli of polynomials.
\end{theorem}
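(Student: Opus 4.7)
The plan is to prove the equivalence through Agler's theorem characterizing the Schur-Agler class on the polydisc: a holomorphic $f:\mathbb{D}^n\to\overline{\mathbb{D}}$ is in the Schur-Agler class if and only if there exist positive semidefinite kernels $K_1,\dots,K_n$ on $\mathbb{D}^n$ with the hereditary decomposition
\[
1-f(z)\overline{f(w)}=\sum_{j=1}^n(1-z_j\bar{w}_j)K_j(z,w).
\]
Thus the theorem reduces to connecting the on-diagonal SOS identity with this off-diagonal Agler decomposition in the rational-inner setting.

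For the easier direction (SOS $\Rightarrow$ Schur-Agler), I would start with the identity $|p(z)|^2-|\tilde{p}(z)|^2=\sum_{j=1}^n(1-|z_j|^2)SOS_j(z)$, write each $SOS_j(z)=\sum_k|q_{j,k}(z)|^2$ for polynomials $q_{j,k}$, and polarize: both sides are polynomials in the independent variables $z$ and $\bar z$, so after replacing $\bar z$ by $\bar w$ I obtain
\[
p(z)\overline{p(w)}-\tilde{p}(z)\overline{\tilde{p}(w)}=\sum_{j=1}^n(1-z_j\bar{w}_j)\sum_k q_{j,k}(z)\overline{q_{j,k}(w)}.
\]
Dividing by $p(z)\overline{p(w)}$ (which never vanishes on $\mathbb{D}^n$) yields an Agler decomposition for $\tilde{p}/p$ with the explicit positive kernels $K_j(z,w)=\sum_k (q_{j,k}(z)/p(z))\overline{(q_{j,k}(w)/p(w))}$. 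Agler's theorem then gives membership in the Schur-Agler class.

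For the reverse direction I would invoke Agler's theorem to start with an abstract decomposition $1-\varphi(z)\overline{\varphi(w)}=\sum_j(1-z_j\bar{w}_j)K_j(z,w)$ for $\varphi=\tilde{p}/p$, multiply through by $p(z)\overline{p(w)}$ to get
\[
p(z)\overline{p(w)}-\tilde{p}(z)\overline{\tilde{p}(w)}=\sum_{j=1}^n(1-z_j\bar{w}_j)\widetilde{K}_j(z,w),
\]
where $\widetilde{K}_j(z,w):=p(z)\overline{p(w)}K_j(z,w)$ remain positive semidefinite. The left-hand side is a polynomial in $(z,\bar w)$ of bidegree controlled by $\mathbf{m}=(m,\dots,m)$, so the task is to show that the $\widetilde{K}_j$ may themselves be chosen to be polynomial kernels of bounded bidegree; once this is done, each $\widetilde{K}_j$ admits a finite sum-of-squares representation $\sum_k q_{j,k}(z)\overline{q_{j,k}(w)}$ for polynomials $q_{j,k}$, and restricting $w=z$ produces exactly the required identity. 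To pass from the abstract $K_j$ to polynomial $\widetilde{K}_j$ of controlled degree, I would appeal to a Kummert/Ball--Trent-type finite-dimensional transfer-function realization of contractive rational maps on $\mathbb{D}^n$, exploiting that $\varphi$ is rational with denominator $p$ of poly-degree $\mathbf{m}$, to extract a finite family of reproducing-kernel vectors whose span reproduces $\widetilde{K}_j$.

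The main obstacle is this last step: Agler's theorem by itself only guarantees \emph{some} positive kernels, with no control over their analytic structure. Upgrading them to polynomial kernels whose common denominator is exactly $p$ requires either a concrete finite-dimensional realization of $\varphi$ (which is available precisely because $\varphi$ is a rational inner function in the Schur-Agler class), or a functional-analytic truncation argument reducing to a finite-rank subproblem. Once this is handled, the SOS identity falls out by restricting the polynomial hereditary identity to the diagonal.
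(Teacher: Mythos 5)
This statement is not proved in the paper at all: it is imported as a known tool and attributed to Knese \cite{Knese1}, so there is no in-paper argument to compare yours against. Judged on its own terms, your forward direction (SOS identity $\Rightarrow$ Schur--Agler) is correct and is the standard easy half: the diagonal identity between polynomials in $z$ and $\bar z$ polarizes uniquely to the hereditary identity in $(z,\bar w)$, division by $p(z)\overline{p(w)}$ produces positive semidefinite kernels, and Agler's theorem converts the decomposition into the von Neumann inequality. One small point worth stating explicitly is that contractivity of $\tilde p/p$ on $\mathbb{D}^n$, needed before Agler's theorem applies, follows by restricting the polarized identity to the diagonal.

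The reverse direction, however, contains a genuine gap, and it sits exactly where you flag it. Agler's theorem hands you kernels $K_j$ with no analytic structure, and the entire content of the theorem is that $\widetilde{K}_j(z,w)=p(z)\overline{p(w)}K_j(z,w)$ can be replaced by \emph{polynomial} positive semidefinite kernels of bidegree controlled by $\mathbf{m}$; multiplying an abstract kernel by $p(z)\overline{p(w)}$ does not make it a polynomial, and the identity $\sum_j(1-z_j\bar w_j)\widetilde{K}_j(z,w)=p(z)\overline{p(w)}-\tilde p(z)\overline{\tilde p(w)}$ constrains only the particular combination on the left, not the individual $\widetilde{K}_j$. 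Your proposed remedy --- a Kummert/Ball--Trent finite-dimensional transfer-function realization --- is circular in $n\ge 3$ variables: Ball--Trent realizations coming from Agler's theorem are a priori infinite-dimensional, Kummert's finite-dimensionality is a two-variable phenomenon, and the existence of a finite-dimensional unitary realization with the right degree bounds for a rational inner Schur--Agler function is essentially equivalent to the polynomial sum-of-squares decomposition you are trying to prove. Closing this requires the actual technical work of Knese's paper (a truncation/degree-reduction argument producing polynomial Agler kernels), which your outline defers rather than supplies.
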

An example of a specific polynomial and its Agler decomposition follows.
\begin{example} \textit{Consider the polynomial $p(z)=2-z_1-z_2,$ where $(z_1,z_2)\in\overline{\mathbb{D}^2}.$ Then $\tilde{p}(z)=2z_1z_2-z_1-z_2$ and}
$$|\tilde{p}(z)|^2-|p(z)|^2=2(1-|z_1|^2)|1-z_2|^2+2(1-|z_2|^2)|1-z_1|^2,(z_1,z_2)\in \overline{\mathbb{D}^2}.$$
\label{easyexample}
\end{example}
The next result appears in \cite{Bickel0} and considers polynomials which are stable in $\overline{\mathbb{D}^2}.$ We remind the reader that a polynomial is called stable on $\overline{\mathbb{D}^2}$ if it has no zeros on $\overline{\mathbb{D}^2}.$ One easy example of such a polynomial is $p(z)=3-z_1-z_2,$ where $(z_1,z_2)\in \overline{\mathbb{D}^2}$
\begin{theorem} Let $p\in \mathbb{C}[z_1,z_2]$. Then $p$ is stable on $\overline{\mathbb{D}^2}$ if and only if there exists a constant $C>0$ such that
$$|\tilde{p}(z)|^2-|p(z)|^2\ge C(1-|z_1|^2)(1-|z_2|^2).$$
\label{bickelstable}
\end{theorem}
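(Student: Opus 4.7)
Plan. The theorem is an ``if and only if,'' and I prove the two directions separately. I read the inequality in the natural orientation $|p|^2-|\tilde{p}|^2\ge C(1-|z_1|^2)(1-|z_2|^2)$, which is the only one compatible with $|\tilde{p}/p|\le 1$ on $\mathbb{D}^2$ whenever $p$ is zero-free there.

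For necessity (inequality $\Rightarrow$ stability), I argue by contradiction, assuming $p(\zeta)=0$ at some $\zeta\in\overline{\mathbb{D}^2}$ and splitting into three cases by location. If $\zeta\in\mathbb{D}^2$, the left-hand side at $\zeta$ equals $-|\tilde{p}(\zeta)|^2\le 0$ while the right-hand side is strictly positive, immediate contradiction. If $\zeta$ lies on a distinguished face, say $|\zeta_1|<1,|\zeta_2|=1$, expanding along the path $z=(\zeta_1,(1-\epsilon)\zeta_2)$ via the reflection formula $\tilde{p}(z)=z_1^n z_2^m\overline{p(1/\bar z_1,1/\bar z_2)}$ shows the left-hand side is $O(\epsilon^2)$ whereas the right-hand side is $\ge 2C\epsilon(1-|\zeta_1|^2)+O(\epsilon^2)$, linear in $\epsilon$; dividing by $\epsilon$ and letting $\epsilon\downarrow 0$ yields the contradiction. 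If $\zeta\in\mathbb{T}^2$, then $p(\zeta)=\tilde{p}(\zeta)=0$, and along the radial path $z=r\zeta$ the non-tangential limit theorem for RIFs (Knese) gives $\tilde{p}(r\zeta)/p(r\zeta)\to\lambda\in\mathbb{T}$, so a Taylor expansion forces the radial directional derivatives to satisfy $|D_p|=|D_{\tilde{p}}|$, whence $|p(r\zeta)|^2-|\tilde{p}(r\zeta)|^2=o((1-r)^2)$ while the right-hand side is $\sim 4C(1-r)^2$, another contradiction.

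For sufficiency (stability $\Rightarrow$ inequality), I start with the Agler decomposition (every two-variable RIF is in the Schur--Agler class, as noted in the paper):
$$|p(z)|^2-|\tilde{p}(z)|^2=(1-|z_1|^2)A(z)+(1-|z_2|^2)B(z),\qquad A,B\ge 0\text{ on }\overline{\mathbb{D}^2}.$$
For each $\zeta_2\in\mathbb{T}$, the univariate slice $q_{\zeta_2}(z_1)=p(z_1,\zeta_2)$ is stable on $\overline{\mathbb{D}}$, and a direct finite Blaschke product computation yields $|q_{\zeta_2}|^2-|\tilde{q}_{\zeta_2}|^2\ge c(\zeta_2)(1-|z_1|^2)$ with $c(\zeta_2)>0$ continuous in $\zeta_2$. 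Since on $|z_2|=1$ the moduli of the two- and one-variable reflections agree (up to the unimodular factor $z_2^m$), compactness of $\mathbb{T}$ upgrades this to $A(z_1,z_2)\ge c_1>0$ uniformly on $\overline{\mathbb{D}}\times\mathbb{T}$, and symmetrically $B\ge c_2>0$ on $\mathbb{T}\times\overline{\mathbb{D}}$. I conclude with a three-region partition: pick $\delta>0$ small enough that $A\ge c_1/2$ on $\{|z_2|\ge 1-\delta\}$ and $B\ge c_2/2$ on $\{|z_1|\ge 1-\delta\}$ (by continuity). On $\{|z_2|\ge 1-\delta\}$ one has $(1-|z_2|^2)\le 2\delta$, so $(1-|z_1|^2)A\ge (c_1/(4\delta))(1-|z_1|^2)(1-|z_2|^2)$; symmetrically on $\{|z_1|\ge 1-\delta\}$; and on the remaining compact interior $\{|z_1|\le 1-\delta,\,|z_2|\le 1-\delta\}$, the strict inequality $|\tilde{p}/p|<1$ together with compactness gives $|p|^2-|\tilde{p}|^2\ge m>0$, which dominates $c(1-|z_1|^2)(1-|z_2|^2)\le c$. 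Taking a common $c$ across the three regions finishes the proof.

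The main obstacle is in the sufficiency direction: passing from the pointwise positivity $A(\cdot,\zeta_2)\ge c_1$ on the face $|z_2|=1$ to a bound comparable to $(1-|z_2|^2)$ on the whole closed bidisc. This step relies essentially on the \emph{strict} stability of $p$ on $\overline{\mathbb{D}^2}$ rather than merely on $\mathbb{D}^2$: only then is the slice $q_{\zeta_2}$ stable on $\overline{\mathbb{D}}$ for every $\zeta_2\in\mathbb{T}$, providing the uniform positive slice-wise lower bound that feeds the three-region decomposition.
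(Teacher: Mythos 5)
The paper does not actually prove this statement: it is imported as a known result from Bickel's \emph{Fundamental Agler Decompositions} \cite{Bickel0} and used as a black box, so there is no internal proof to compare yours against; your write-up is a self-contained argument and, as far as I can check, a correct one. Two observations. First, you are right to reverse the inequality: as printed, $|\tilde p|^2-|p|^2\ge C(1-|z_1|^2)(1-|z_2|^2)$ cannot hold for stable $p$, since then $\tilde p/p$ is inner and the left-hand side is $\le 0$ on $\mathbb{D}^2$; the same sign slip appears in the paper's Example \ref{easyexample}, and the orientation you adopt is the one in \cite{Bickel0}. Second, on the substance: in the necessity direction the face case needs only $|p(z)|^2-|\tilde p(z)|^2\le |p(z)|^2=O(\epsilon^2)$ (the reflection formula is superfluous there), and in the $\mathbb{T}^2$ case you should say explicitly that Cases 1 and 2 have already forced $p\ne 0$ on $\mathbb{D}^2$ and on the faces, which is what entitles you to treat $\tilde p/p$ as a RIF and invoke Knese's unimodular non-tangential limit to conclude $|D_p|=|D_{\tilde p}|$ (and when $D_p=0$ the bound $|p(r\zeta)|^2=O((1-r)^4)$ already suffices). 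The sufficiency direction --- Agler decomposition, strict positivity of the sum-of-squares terms on the faces via one-variable Blaschke estimates on slices, then a three-region patching --- is exactly the mechanism underlying Bickel's theorem and is correctly executed; note only that you do not need continuity of $c(\zeta_2)$, since pointwise positivity of the continuous function $A$ on the compact set $\overline{\mathbb{D}}\times\mathbb{T}$ already yields the uniform lower bound, that the slice bound requires the $z_1$-degree of $p$ to be at least one (otherwise the $B$-term must carry the estimate alone, which it does), and that the statement tacitly excludes constant $p$, for which both sides vanish identically.
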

For more details about the Schur-Agler class the reader can study, for instance, the article of Knese \cite{Knese1}, and the references therein. Example \ref{easyexample} features in \cite{Knese2} as well.

\section{Proofs}
This section is dedicated to the proofs of the main results stated before. We begin with the proof of Theorem \ref{th2}. For convenience, we will provide again the statement of the results before their proofs.\\\\
\textbf{Theorem \ref{th2}}.\textit{Let $\Phi\in\mathcal{O}(\mathbb{D}^2,\mathbb{D}^2),$ $\Phi=(\varphi,\varphi)$ where $\varphi$ is a RIF.  Assume that $\varphi(z_1,z_2)=\frac{\tilde{p}(z_1,z_2)}{p(z_1,z_2)}$ where $p\in \mathbb{C}[z_1,z_2]$ is a polynomial which is stable on $\mathbb{D}^2,$ with a single zero on $\mathbb{T}^2.$ If there exists an exponent $q>1$ and a neighborhood $\mathcal{U}$ of $\tau$ such that
$$|p(z)|\ge C\mathrm{dist}^q(z,\mathcal{Z}_p),z\in\overline{\mathbb{D}^2\cap \mathcal{U}}$$
  then, $C_{\Phi}: A^2(\mathbb{D}^2)\to A^2(\mathbb{D}^2)$ is not bounded.}

\begin{proof}
Set $\Phi=(\varphi,\varphi):\mathbb{D}^2\to \mathbb{D}^2,$ following the assumption that $\varphi(z_1,z_2)=\frac{\tilde{p}(z_1,z_2)}{p(z_1,z_2)}.$ Assume, without losing the generality, that $\varphi$ has non tangential value equal to 1 at the point $\tau=(1,1).$ As in \cite{Bayart2} and \cite{Kosinski} we are interested in estimating the volume $V_{\beta}(\Phi^{-1}(S(e,\tilde{\delta}))), \tau=(1,1),$ for $\tilde{\delta}=(\delta_1,\delta_1)$ which, by our choice of $\Phi,$ is equivalent to estimate the volume
$$V_{\beta}(\{z\in \mathbb{D}^2:|\varphi(z)-1|\leq \delta_1\}),\delta_1\in(0,1).$$
Let us draw a sketch of the proof. The idea is to bound from above the difference $|\varphi(z)-1|.$ This will allow us to find subsets of the sublevel sets $\{z\in \mathbb{D}^2:|\varphi(z)-1|\leq \delta_1\}$ which will have easier volume to calculate.
Let us postpone the proof of the Theorem \ref{th2}. The following lemma is crucial. 
\begin{lemma} Let $\varphi$ be a RIF which has only one singularity on $\tau=(\tau_1,\tau_2) \in \mathbb{T}^2$ and has non tangential value equal to 1 at $\tau\in\mathbb{T}^2.$ Then there exists  $\epsilon,q>0$, a constant $C>0$ and a neighborhood $\mathcal{U}$ of $\tau,$ such that
$$|\varphi(z)-1|\leq \frac{C\epsilon}{(|z_1-\tau_1|^2+|z_2-\tau_2|^2)^{\frac{q}{2}}},z=(z_1,z_2)\in \overline{\mathbb{D}^2\cap \mathcal{U}}. $$
\end{lemma}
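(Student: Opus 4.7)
The natural approach is to split $\varphi-1$ into its polynomial pieces,
$$\varphi(z) - 1 \;=\; \frac{\tilde{p}(z) - p(z)}{p(z)},$$
and estimate the numerator and denominator separately on a small enough neighborhood of $\tau$. Since $\tau$ is a singular point of $\varphi$, both $p(\tau)=0$ and $\tilde{p}(\tau)=0$, so the numerator is a polynomial vanishing at $\tau$, while the denominator has an isolated zero there (the unique zero of $p$ in $\overline{\mathbb{D}^2}$ near $\tau$, because $p$ is stable in $\mathbb{D}^2$ and has $\tau$ as its only boundary zero).

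For the numerator, a first-order Taylor expansion of the polynomial $\tilde{p}-p$ around $\tau$ yields $|\tilde{p}(z)-p(z)|\leq C_1|z-\tau|$ uniformly on any bounded neighborhood; in particular, if $\mathcal{U}$ is a ball of radius $\epsilon$ around $\tau$, the numerator is bounded by $C_1\epsilon$ on $\overline{\mathbb{D}^2\cap\mathcal{U}}$. For the denominator, I would invoke the \L{}ojasiewicz inequality applied to the real-analytic function $|p|^2$ on the compact set $\overline{\mathbb{D}^2\cap\mathcal{U}}$. By shrinking $\mathcal{U}$ we may assume that the only point of $\mathcal{Z}_p$ in $\overline{\mathbb{D}^2\cap\mathcal{U}}$ is $\tau$ itself, so the relevant distance from $z\in\overline{\mathbb{D}^2\cap\mathcal{U}}$ to the zero set reduces to $|z-\tau|$, giving
$$|p(z)| \;\geq\; C_2\,|z-\tau|^q$$
for some exponent $q>0$ and constant $C_2>0$. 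Combining,
$$|\varphi(z)-1| \;=\; \frac{|\tilde{p}(z)-p(z)|}{|p(z)|} \;\leq\; \frac{C_1\epsilon}{C_2\,|z-\tau|^q} \;=\; \frac{C\epsilon}{(|z_1-\tau_1|^2+|z_2-\tau_2|^2)^{q/2}},$$
which is the target inequality.

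The step I expect to be the main obstacle is the geometric reconciliation in the denominator estimate: the \L{}ojasiewicz-type bound in Theorem \ref{th2} is phrased in terms of the Euclidean distance to the full complex variety $\mathcal{Z}_p\subset\mathbb{C}^2$, which near $\tau$ typically has branches leaving $\overline{\mathbb{D}^2}$, so a priori $\mathrm{dist}(z,\mathcal{Z}_p)$ can be strictly smaller than $|z-\tau|$ for $z\in\mathbb{D}^2$. One must argue that for $z$ in the closed bidisc in a small neighborhood of $\tau$, the nearby points of $\mathcal{Z}_p$ are not appreciably closer than $\tau$ itself, i.e.\ $\mathrm{dist}(z,\mathcal{Z}_p)\geq c|z-\tau|$ on $\overline{\mathbb{D}^2\cap\mathcal{U}}$. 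Generically, when the tangent cone of $\mathcal{Z}_p$ at $\tau$ meets the outward normal to $\overline{\mathbb{D}^2}$ transversely, this comparability is immediate; for degenerate tangencies one may have to absorb the loss by enlarging the exponent $q$, which is harmless since the lemma only requires existence of some $q>0$. Modulo this geometric point, the remainder of the proof is a straightforward Taylor/\L{}ojasiewicz combination.
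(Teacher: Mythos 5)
Your proposal is correct and follows essentially the same route as the paper: the numerator $\tilde p - p$ is made small near $\tau$ by a continuity argument (the paper) or a first-order Taylor bound (you), and the denominator is bounded below by a \L{}ojasiewicz-type inequality, with the distance then identified with $|z-\tau|$ because $\tau$ is the only zero of $p$ in the relevant compact set. The geometric point you flag --- reconciling $\mathrm{dist}(z,\mathcal{Z}_p)$ with $|z-\tau|$ --- is precisely the step the paper passes over by writing the distance to $\mathcal{Z}_p\cap\mathbb{T}^2=\{\tau\}$ directly in its chain of inequalities, so your treatment is, if anything, the more candid of the two.
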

\begin{proof}
Without losing the generality, assume that $\tau=(1,1).$
The idea is to apply \L{}ojasiewicz inequality for a compact set, and find a local upper bound for the difference $|\varphi(z)-1|$. Consider $D(\tau,\tilde{r}),$ where $\tilde{r}=(r,r)$ for $r>0$ a closed bidisc around $\tau.$ The set $\overline{\mathbb{D}^2}\cap \overline{D(\tau,\tilde{r})}$ is compact. Applying a continuity argument for  $|\tilde{p}(z)-p(z)|=|\tilde{p}(z)-p(z)-\tilde{p}(\tau)+p(\tau)|$ near $\tau \in \mathbb{T}^2$ and combining it with  \L{}ojasiewicz inequality on the compact set $\overline{\mathbb{D}^2}\cap \overline{D(\tau,\tilde{r})}$ one can find  $q,\epsilon>0$ and a positive constant $C>0,$ such that the following chain holds:
\begin{align}
|\varphi(z)-1|=&\left|\frac{\tilde{p}(z)}{p(z)}-1\right|&&\\
=&\left|\frac{\tilde{p}(z)-p(z)}{p(z)}\right|&&\\
\leq& \frac{\epsilon}{|p(z)|}&&\\ \leq
&\frac{\epsilon}{C\mathrm{dist}^q(z,\mathcal{Z}_{p}\cap \mathbb{T}^2)}&&\\
= &\frac{\epsilon}{C(|z_1-1|^2+|z_2-1|^2)^{\frac{q}{2}}}, 
\end{align}
In particular, inequality (4.3) is obtained using the above mentioned continuity argument, and inequality on (4.4) was obtained after applying \textit{\L{}ojasiewicz inequality} for the polynomial $p(z)$ on the compact set $\overline{\mathbb{D}^2}\cap \overline{D(\tau,\tilde{r})}.$
\end{proof}
At this point we can proceed to the proof of the non-boundedness Theorem.
Fix an $r>0$ such that the bi-annulus $\left(\frac{\epsilon}{2\delta_1}\right)^{\frac{1}{q}}<|z_i-1|<\epsilon^{\frac{1}{q}}$ where $i=1,2$ is contained into $\overline{\mathbb{D}^2\cap D(\tau,\tilde{r})},$  for all $1>\delta_1>1/2.$ Essentially we "cut" a small bi-annulus near the singularity, contained inside $\overline{\mathbb{D}^2\cap D(\tau,\tilde{r})}.$  After some calculations one observes that 
if $z_1,z_2$ reside in the biannulus mentioned above, then $\frac{\epsilon}{(|z_1-1|^2+|z_2-1|^2)^{\frac{q}{2}}}<\delta_1$ which implies, by Lemma 4.3, that $|\varphi(z)-1|<\delta_1.$ Now let $\tau=(1,1),$ $\tilde{\delta}=(\delta_1,\delta_1).$ For $\epsilon>0$ small enough, the volume of the inverse image under $\Phi$ of the Carleson box will contain a section of the above mentioned bi-annulus. This implies that there exist a positive constant $1>C>0$ such that the upcoming chain holds. 
\begin{align}
V(\Phi^{-1}(S(e,\tilde{\delta})))=&V(\{z\in\mathbb{D}^2:|\varphi(z)-1|\leq\delta_1\})&&\\
>&V(\{z\in \mathbb{D}^2\cap \mathcal{U}:|\varphi(z)-1|\leq \delta_1\})&&\\
>&\int_{\{z\in\mathbb{D}^2:\left(\frac{\epsilon}{2\delta_1}\right)^{\frac{1}{q}}<|z_i-1|<\epsilon^{\frac{1}{q}},i=1,2\}}1dV(z_1,z_2)&&\\
>&C\pi^2 \left( \epsilon^{\frac{2}{q}}-\left(\frac{\epsilon}{2\delta_1}\right)^{\frac{2}{q}} \right)^2&&\\
>&C'(\epsilon)\delta_1^{4/q}&&\\
>&C'(\epsilon)\delta_1^{4}
\end{align}for all $\delta_1\in I,$ where $C(\epsilon)>0$ is a constant depending only on $\epsilon,$ and $\epsilon$ is independent of $\delta_1.$ Following our assumption that $q>1,$ and since $\epsilon$ is a small fixed positive real number, the above arguments prevent $C_{\Phi}$ to be bounded on $A^2(\mathbb{D}^2).$
\end{proof}
\textbf{Theorem \ref{th3}} \textit{Let $\Phi=(\varphi,\varphi)\in \mathcal{O}(\mathbb{D}^2,\mathbb{D}^2), \varphi=\frac{\tilde{p}}{p},$ $p$ a stable polynomial on $\overline{\mathbb{D}^2}.$ Then, for all $\beta>4,$ the composition operator
$C_{\Phi}:A^2_{\frac{\beta}{2}-2}(\mathbb{D}^2)\to A^2_{\beta}(\mathbb{D}^2)$ is bounded.}

\begin{proof}
 (\textit{Proof of Theorem \ref{th3}.})
 We have to estimate the $V_{\beta}-$ volumes for every $\zeta \in \mathbb{T}, $ for all $\delta_i \in (0,2),i=1,2$ of the sublevel set $\{z\in \mathbb{D}^2:|\varphi(z)-\zeta|<\delta_i\},$ where $\varphi=\frac{\tilde{p}}{p}$ a RIF which is induced by a polynomial which is stable on $\overline{\mathbb{D}^2}.$ For all $\zeta \in \mathbb{T},$ by an application of Theorem \ref{bickelstable} and the Maximum Modulus Principle
\begin{align}
|\varphi(z)-\zeta| =&\left|\frac{\tilde{p}(z)}{p(z)}-\zeta\right|&&\\
                   \ge&\left|\frac{|\tilde{p}(z)|-|p(z)|}{p(z)}\right|&&\\
                   \ge&\frac{C(1-|z_1|^2)(1-|z_2|^2)}{2M^2},            
\end{align}
Here, $M=\max_{z\in\overline{\mathbb{D}^2}}|p(z)|.$
The above facts imply that the sublevel sets $\{z\in \mathbb{D}^2:|\varphi(z)-\zeta|<\delta_i\},$ are contained in the sublevel sets $\{z\in\mathbb{D}^2:(1-|z_1|^2)(1-|z_2|^2)\leq 2M^2\delta_i\}$ for all $\delta_i\in(0,2).$
Hence,
\begin{align}
V_{\beta}(\Phi^{-1}(S(\zeta,\delta)))\leq& 
V_{\beta}(\{z\in \mathbb{D}^2:|\varphi(z)-\zeta|<\delta_1\})&&\\
\leq & V_{\beta}(\{z\in\mathbb{D}^2:(1-|z_1|^2)(1-|z_2|^2)\leq 2M^2\delta_1\})&&\\
\leq &\int_{\{z\in\mathbb{D}^2:(1-|z_1|^2)(1-|z_2|^2)\leq 2M^2\delta_1\}}dV_{\beta}(z_1,z_2)&&\\
\leq & (2M^2)^{\beta} \delta_1^{\beta-2+2}.
\end{align}
This suffices to prove the boundedness of $C_{\Phi}:A^2_{\frac{\beta}{2}-2}(\mathbb{D}^2)\to A^2_{\beta}(\mathbb{D}^2)$ for exponents $\beta>4$
\end{proof}
 Let us briefly comment on the previous result. Using the generalized version of Theorem \ref{bickelstable} for arbitrary dimension, setting $\Phi=(\varphi,...,\varphi)\in \mathcal{O}(\mathbb{D}^n,\mathbb{D}^n),$ where $ \varphi=\frac{\tilde{p}}{p}$ a RIF induced by a stable polynomial on $\overline{\mathbb{D}^n},$ and repeating the same estimates, one receives $C_{\Phi}:A^2_{\frac{\beta}{n}-2}(\mathbb{D}^n)\to A^2_{\beta}(\mathbb{D}^n),$ for all $n\ge 2,$ and for all $\beta> 2n.$\\\\

\textbf{Theorem \ref{th4}}
\textit{
Let $\Phi=(\varphi_1,...,\varphi_n):\mathbb{D}^n\to\mathbb{D}^n$, where $\varphi_i=\frac{\tilde{p_i}}{p_i},i=1,...,n$ rational inner functions on the bidisc that belong to the Schur-Agler class. If the $V_\beta$-volumes satisfy
$$V_{\beta}\left(\biggl\{z\in \mathbb{D}^n:\sum_{j=1}^n(1-|z_j|^2)SOS_{ij}\leq \delta_i M\biggr\}\right)\leq C\delta_i^{n(\beta+2)}$$
for $i=1,...,n,$ for all $\delta_i\in(0,1),$ for some constant $M>0,$ then, the composition operator $C_{\Phi}: A_{\beta}^2(\mathbb{D}^n)\to A^2_{\beta}(\mathbb{D}^n)$
is bounded, where $\beta\ge -1,$ and $SOS_{ij}$ are sums of squared moduli of  the polynomials $p_i\in \mathbb{C}[z_1,...,z_n].$}
\begin{proof}
Set $\Phi=(\varphi_1,...,\varphi_n):\mathbb{D}^n\to \mathbb{D}^n$ where  $\varphi_i,i=1,2$ are RIFs that belong to the Schur-Agler class on the polydisc. Initially, we have:
$$|p_i(z)|^2-|\tilde{p}_i(z)|^2=\sum_{j=1}^{n}(1-|z_j|^2)SOS_{ij}(z),$$
which comes from the Agler decomposition of the polynomials $p_i.$
Developing the difference of squares on the left hand side, gives us
$$2M_i(|p_i(z)|-|\tilde{p}_i(z)|)\ge |p_i(z)|^2-|\tilde{p}_i(z)|^2=\sum_{j=1}^{n}(1-|z_j|^2)SOS_{ij}(z),$$
where $M_i$ are the maximums of each polynomial $p_i$ on $\overline{\mathbb{D}^n}.$
At this point, we will estimate the volume 
$$V_{\beta}(\{z\in \mathbb{D}^n:|\varphi_i(z)-\zeta|\leq \delta_i\}),$$ 
for all $\zeta\in\mathbb{T}$
and for all $\delta_i\in(0,1),i=1,2...,n.$ By the triangle inequality one has:
\begin{align}
\delta_i\ge|\varphi_i(z)-\zeta|\ge&\left|\frac{|p_i(z)|-|\tilde{p}_i(z)|}{|p_i(z)|}\right|&&\\
\ge &\frac{1}{2M_i}\frac{\sum_{j=1}^{n}(1-|z_j|^2)SOS_{ij}(z)}{|p_i(z)|}&&\\
\ge &\frac{1}{2M_i^2}\sum_{j=1}^{n}(1-|z_j|^2)SOS_{ij}(z)
\end{align}
The last inequality implies that the family of sublevel sets 
$$A_{p_i}=\biggl\{z\in\mathbb{D}^n: \sum_{j=1}^{n}(1-|z_j|^2)SOS_{ij}(z) \leq 2M_i^2\delta_i \biggr\}$$ contains the sublevel sets
$S_{\varphi_i}=\{z\in\mathbb{D}^n: |\varphi_i(z)-\zeta|\leq \delta_i\},$
for all $\zeta\in \mathbb{T}.$ Therefore, following the assumption in the statement of the Theorem, one obtains that
\begin{align}
V_{\beta}(\Phi^{-1}(S(\zeta,\tilde{\delta})))\leq &V_{\beta}(\{z\in \mathbb{D}^n:|\varphi_i(z)-1|\leq \delta_i\})&&\\
\leq& C\delta_i^{n(\beta+2)}
\end{align}
\end{proof}
 Due to the absence of a Taylor expansion for a RIF near a singularity, it seems that the only viable way to estimate the volume of the sets $\Phi^{-1}(S(\zeta,\tilde{\delta}))$ is via the difference of squares formula of the corresponding polynomials. The above stated result is very difficult to verify in higher dimensions, nevertheless, in the next Section we provide an example in which we apply the above technique to obtain estimates which entail boundedness of the composition operator  $C_{\Phi}:A^2_{\frac{\beta}{4}-2}(\mathbb{D}^2)\to A^2_{\beta}(\mathbb{D}^2), \beta \ge 8,$ where the symbol is induced by a RIF with a singularity.
\section{Examples}
In this section, we consider three examples. The first example showcases boundedness of the composition operator $C_{\Phi}:A^2_{\frac{\beta}{4}-2}(\mathbb{D}^2)\to A^2_{\beta}(\mathbb{D}^2), \beta \ge 8.$ This specific example is interesting as it utilizes the difference of squares formula for the chosen $p\in\mathbb{C}[z_1,z_2]$. Using (roughly )the same symbol $\Phi$ as in Example 5.1, we will obtain non-boundedness of $C_{\Phi}$ acting on the classical Bergman space $A^2(\mathbb{D}^2).$ The approach of the proof of Theorem \ref{th2} will be followed. The comparison of the first two examples is interesting, as we observe a jump of length 8 in the switch from boundedness to non-boundedness. The reason behind this jump is probably laying into the fact that our estimates seem to not be sharp. We think that this jump can be improved. Lastly, a short example of a symbol induced by a RIF which, in turn, is induced by a stable polynomial is given, in light of Theorem \ref{th3}. 
\begin{example} Let
$$\Phi(z_1,z_2)=(\varphi,\varphi)=\left(\frac{2z_1z_2-z_1-z_2}{2-z_1-z_2},\frac{2z_1z_2-z_1-z_2}{2-z_1-z_2}\right), z_1,z_2\in \mathbb{D}.$$
Then, the composition operator $C_{\Phi}:A^2_{\frac{\beta}{4}-2}(\mathbb{D}^2)\to A^2_{\beta}(\mathbb{D}^2), \beta \ge 8$ is bounded.
\end{example}
\begin{proof}
Consider the polynomial $p(z)=2-z_1-z_2, z_1,z_2\in \mathbb{D}.$  
The Agler decomposition of the difference of squares, by Example \ref{easyexample} is 
$$|\tilde p(z)|^2-|p(z)|^2=2(1-|z_1|^2)|1-z_2|^2+2(1-|z_2|^2)|1-z_1|^2.$$ For all $\zeta \in \mathbb{T}$ one has:
\begin{align}|\varphi(z)-\zeta|=&\frac{|\tilde{p}(z)-\zeta p(z)|}{|p(z)|}&&\\
                             \ge&\frac{||\tilde{p}(z)|-|p(z)||}{|p(z)|}&&\\
                             \ge&\frac{2(1-|z_1|^2)|1-z_2|^2+2(1-|z_2|^2)|1-z_1|^2}{|p(z)|(|\tilde{p}(z)|+|p(z)|)}&&\\        \ge&\frac{1}{8}(1-|z_1|)^2(1-|z_2|)^2    
\end{align}
At this moment, take $\delta_i\in(0,2).$
\begin{align} V_{\beta}(\{z\in\mathbb{D}^2:|\varphi(z)-\zeta|<\delta_i\})\leq & \int_{\{z\in \mathbb{D}^2:(1-|z_1|)(1-|z_2|)\leq 2\sqrt{2}\delta_i^{1/2}\}}dV_{\beta}(z_1,z_2) &&\\
\leq& (2\sqrt{2})^\beta \delta_i^{\frac{\beta}{2}-2+2}
\end{align}
and the proof is complete.
\end{proof}
\begin{example} Let
$$\Phi(z_1,z_2)=(\varphi,\varphi)=\left(-\frac{2z_1z_2-z_1-z_2}{2-z_1-z_2},-\frac{2z_1z_2-z_1-z_2}{2-z_1-z_2}\right), z_1,z_2\in \mathbb{D}.$$
Then, the composition operator $C_{\Phi}:A^2(\mathbb{D}^2)\to A^2(\mathbb{D}^2)$ is not bounded.
\end{example}
\begin{proof}
The non-tangential value of  $\varphi$ at the singular point $(1,1)$ is equal to 1. Hence, one has to estimate the volume
$$V(\{z\in \mathbb{D}^2:|\varphi(z)-1|<\delta\})>V(\{z\in\mathbb{D}^2\cap\mathcal{U}:|\varphi(z)-1|<\delta\})$$
for $\delta>0$ and $\mathcal{U}$ a neighborhood of $(1,1)\in \mathbb{T}^2.$ Take as $\mathcal{U}$ a small bidisc centered at $(1,1)$ with radius $\tilde{\epsilon}=(\sqrt{\epsilon},\sqrt{\epsilon}), \epsilon\in(0,1).$ By Example 4 of \cite{Bergvist} (see page 18-19 of the mentioned article) we know that the \L{}ojasiewicz inequality for the polynomial $p(z)=2-z_1-z_2, z=(z_1,z_2)\in\mathbb{D}^2$ holds for the exponent $q=2.$ As a result, for $\frac{\epsilon}{2}<\delta<1$ one has:
\begin{align}|\varphi(z)+1|=&\left|\frac{2(1-z_1)(1-z_2)}{2-z_1-z_2}\right|&&\\
\leq& \frac{2\epsilon}{C\mathrm{dist}^2(z,(1,1))}&&\\
\leq& \frac{C'\epsilon}{|z_1-1|^2+|z_2-1|^2},   z\in \overline{\mathbb{D}^2}\cap \overline{D^2((1,1)},\tilde{\epsilon}).
\end{align}
Inequality (5.10) lets us conclude that if $\frac{1}{\sqrt{2}}\sqrt{\frac{\epsilon}{2\delta}}<|z_i-1|<\sqrt{\epsilon}$, for $i=1,2$  where $\frac{\epsilon}{2}<\frac{1}{2}<\delta<1,$ then $|\varphi(z)+1|<\delta.$ Following our observations, one receives
\begin{align}
V(\{z\in\mathbb{D}^2\cap\mathcal{U}:|\varphi(z)+1|<\delta\})>&\int_{\{z\in\mathbb{D}^2:\frac{1}{\sqrt{2}}\sqrt{\frac{\epsilon}{2\delta}}<|z_i-1|<\sqrt{\epsilon},i=1,2.\}}dV(z_1,z_2)&&\\
=&C\pi^2\left(\epsilon-\frac{\epsilon}{2\delta}\right)^2&&\\
>&C(\epsilon)\delta^2,
\end{align}
with inequality (5.13) holding for all $\delta$ belonging in a suitable subinterval of $(\frac{1}{2},1)$ (otherwise the set $\Phi^{-1}(S(e,\tilde{\delta}))$ would be empty) for some constant $C(\epsilon)>0$ dependent only $\epsilon>0,$ where $\epsilon$ is not dependent on $\delta.$ 
\end{proof}
\begin{example}
Let
$$\Phi(z_1,z_2)=(\varphi,\varphi)=\left(\frac{3z_1z_2-z_1-z_2}{3-z_1-z_2},\frac{3z_1z_2-z_1-z_2}{3-z_1-z_2}\right), z_1,z_2\in \mathbb{D}.$$
Then, the composition operator $C_{\Phi}:A^2_{\frac{\beta}{2}-2}(\mathbb{D}^2)\to A^2_{\beta}(\mathbb{D}^2), \beta>4,$ is bounded.
\end{example}
\begin{proof} Let $p(z)=3-z_1-z_2, z=(z_1,z_2)\in \overline{\mathbb{D}^2}.$ This polynomial is stable on $\overline{\mathbb{D}^2},$ hence by repeating the estimates appearing in the proof of Theorem \ref{th3}, one receives boundedness of $C_{\Phi}:A^2_{\frac{\beta}{2}-2}(\mathbb{D}^2)\to A^2_{\beta}(\mathbb{D}^2), \beta >4.$
\end{proof}
\section{Conclusions and further discussion}
The study of composition operators in more variables yields a substantial amount challenges and difficulties. Considering symbols that are induced by RIFs seems to be a fruitful subject that connects the theory of composition operators and the theory of RIFs on the bidisc or polydisc. It seems rather difficult to answer Question 1. In order to characterize the Rational Inner Mappings that induce bounded composition operators, a lot more information is needed. Specifically, one has to estimate the difference $|\varphi(z)-\zeta|$ for all $\zeta\in\mathbb{T},$ where $\varphi$ is a RIF on the bidisc or polydisc. Also, another interesting question which seems attractive but difficult to tame is what exactly happens in the case of the polydisc $\mathbb{D}^n, n\ge 3.$
\bibliographystyle{amsplain}

Athanasios Beslikas,\\
Doctoral School of Exact and Natural Studies\\
Institute of Mathematics,\\
Faculty of Mathematics and Computer Science,\\
Jagiellonian University\\ 
\L{}ojasiewicza 6\\
PL30348, Cracow, Poland\\
athanasios.beslikas@doctoral.uj.edu.pl
\end{document}